	\def\set#1{\left\{ {#1} \right\}}
	\def\setof#1#2{{\left\{#1\,:\,#2\right\}}}
	\def\P{{\mathbf P}}
	\def\V{{\tilde{V}}}
	\def\H{{\mathcal H}}
	\def\LL{{\mathbb L}}
	\def\Ass{{\text{Ass}}}
	\def\isomorphic{{\,\cong\,}}
\theoremstyle{plain}
\newtheorem{thm}{Theorem}[section]
\newtheorem{cor}[thm]{Corollary}
\newtheorem{prop}[thm]{Proposition}
\newtheorem{lem}[thm]{Lemma}
\theoremstyle{definition}
\newtheorem{ex}[thm]{Example}
\newtheorem{defn}[thm]{Definition}
\newtheorem{conj}[thm]{Conjecture}
\newtheorem{question}[thm]{Question}
\newtheorem{notation}[thm]{Notation}
\newtheorem{rem}[thm]{Remark}
\title{On the Fattening of Lines in $\P^3$}
\author{Mike Janssen}
\address{Department of Mathematics, University of Nebraska, Lincoln, Nebraska 68588-0130}
\email{janssen@huskers.unl.edu}
\thanks{The author wishes to thank Brian Harbourne for many helpful conversations during the preparation of this work, especially in the process of writing the dissertation \cite{Janssen}. The author also wishes to thank Juan Migliore for his very helpful comments on arithmetically Cohen Macaulay schemes, and Tomasz Szemberg for his very helpful suggestion for a simpler proof of Proposition \ref{PropGenHyperplaneNon}.}
\begin{document}

\maketitle

\begin{abstract}
We follow the lead of \cite{BocciChiantini} and show how differences in the invariant $\alpha$ can be used to classify certain classes of subschemes of $\P^3$.
	Specifically, we will seek to classify arithmetically Cohen-Macaulay codimension 2 subschemes of $\P^3$ in the manner Bocci and Chiantini classified points in $\P^2$. 
	The first section will seek to motivate our consideration of the invariant $\alpha$ by relating it to the Hilbert function and $\gamma$, following the work of \cite{BocciChiantini,Dumnicki2}.
	The second section will contain our results classifying arithmetically Cohen-Macaulay codimension 2 subschemes of $\P^3$.
	This work is adapted from the author's Ph.D.\ dissertation \cite{Janssen}.
\end{abstract}


	\section{The Importance of $\alpha$}
		Let $k$ be an algebraically closed field of arbitrary characteristic.
		Much is known about finite sets of reduced points $Z\subseteq \P^2$ over $k$.
		In particular, \cite{GMR} classified all possible Hilbert functions of finite sets of reduced points in $\P^N$ over $k$.
		However, not much is known about the double scheme, $2Z$ (however, see \cite{GMS,GHM}).
		
		\begin{defn}\label{DefSymbolicPower}
			In general, the $m$-th symbolic power of a homogeneous ideal $I\subseteq R=k[\P^N]$ is 
			\[
				I^{(m)} = R \cap \left( \cap_{P\in\Ass(I)} (I^m R_P) \right),
			\]
			where $\Ass(I)$ denotes the set of associated primes of $I$ and $R_P$ is the ring $R$ localized at the prime $P$.
		\end{defn}
		
		When $I$ is the ideal of a complete intersection, $I^{(m)} = I^m$ (see \cite[Lemma 5, Appendix 6]{Zariski-Samuel}), and thus if $I = (L_1,L_2)$ (where $L_1,L_2$ are linear forms) is the ideal of a linear codimension 2 complete intersection (e.g., a point in $\P^2$, or a line in $\P^3$), we have $I^{(m)} = (L_1,L_2)^m$.

		\begin{defn}\label{DefDoubleScheme}
			Let $Z\subseteq \P^N$ be a reduced subscheme defined by $I = I(Z)$.
			The double scheme (often called the double point scheme if $Z$ is a set of reduced points, or the fattening) is the subscheme of $\P^N$ defined by $I^{(2)}$ and denoted $2Z$.
		\end{defn}
		
		In this paper, we follow the lead of Bocci and Chiantini \cite{BocciChiantini} and others in studying the number $\alpha(I)$, where $I$ is the ideal of a reduced codimension 2 subscheme.
		Recall that, if $I$ is a nonzero homogeneous ideal in $k[\P^N]$, the number $\alpha(I)$ is the degree of a nonzero polynomial of least degree in $I$.
		(Equivalently, if $I_d$ denotes the homogeneous component of $I$ of degree $d$, $\alpha(I) = \min\setof{d}{I_d\not=0}$.)
		
		%
		%
		
		Now, it is not difficult to see that the number $\alpha$ is the degree in which the Hilbert function of the quotient $R/I$ first deviates from that of the ring $R=k[\P^N]$.
		Indeed, recall that the Hilbert function of the quotient of a homogeneous ideal $I\subseteq R = $ in degree $t$ is $H(R/I,t) = \dim_k (R_t) - \dim_k (I_t)$.
		If $t < \alpha(I)$, $\dim_k (I_t) = 0$, hence $H(R/I,t) = \dim_k (R_t) = \binom{t+N}{N}$.
		
		Thus, Bocci and Chiantini, rather than compute $\alpha$ (or even Hilbert functions) of various planar point configurations in $\P^2$ or their symbolic powers, chose to study the difference $t:= \alpha(2Z) - \alpha(Z)$.
		This is related to the Waldschmidt constant 
		\[
			\gamma(I) := \lim\limits_{m\to\infty} \frac{\alpha(I^{(m)})}{m}.
		\]
		Understanding $\alpha(I^{(m)})$ for all $m\geq 1$ is a difficult task, while classifying $Z$ based on the difference $t$ is more tractable (though, as $t$ increases, it grows more difficult).
		
		An important first observation about $t$ is that $t \geq 1$ always holds; indeed, if $k$ has characteristic 0, let $F$ be a form of minimal degree $\alpha(2Z)$ vanishing to order at least 2 at each point of $Z$.
		Then the partial derivatives of $F$ vanish on $Z$, and the degree of the partial derivatives is less than the degree of $F$.
		If $k$ has characteristic $p > 0$, then it may happen that every partial derivative of $F$ is identically 0.
		In that case, $F$ is the $p$th power of some form $G$, which vanishes at each point of $Z$, and thus $t \geq 1$.
		
		We follow the lead of \cite{BocciChiantini} and say that a subscheme $Z\subseteq \P^N$ has type $(d-t,d)$ if $\alpha(Z) = d-t$ and $\alpha(2Z) = d$.
		
		Bocci and Chiantini examine cases when $t$ is small; specifically, they consider $t=1,2$.
		When $t=1$, they use B\'{e}zout's Theorem to find:
		
		\begin{thm}[Example 3.1, Proposition 3.2, and Theorem 3.3 of \cite{BocciChiantini}]\label{TheoremBC}
			Let $Z\subseteq \P^2$ be a finite set of points.
			Then $t = 1$ if and only if either $Z$ is a set of collinear points and $\alpha(Z) = 1$ or $Z$ is a star configuration of $\binom{d}{2}$ points and $\alpha(Z) = d-1$.
		\end{thm}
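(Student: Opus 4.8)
The plan is to treat the two directions separately. For the easier ($\Leftarrow$) direction I would simply exhibit the relevant forms. If $Z$ is collinear, say $Z\subseteq V(L)$ with $L$ linear, then $L^2\in I(2Z)$ forces $\alpha(2Z)\le 2$; since no linear form vanishes doubly at a point, $\alpha(2Z)=2$ and $t=1$. If $Z$ is the star configuration cut out by general lines $L_1,\dots,L_d$, then restricting any form of degree $<d-1$ to a line $L_i$ (which carries $d-1$ of the points) shows $L_i$ must divide it, hence $L_1\cdots L_d$ divides it, which is impossible on degree grounds; thus $\alpha(Z)=d-1$. As $L_1\cdots L_d\in I(2Z)$ gives $\alpha(2Z)\le d$ and the general bound $t\ge 1$ gives $\alpha(2Z)\ge d$, we conclude $t=1$.

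For the ($\Rightarrow$) direction, set $m=\alpha(Z)$, so that $\alpha(2Z)=m+1$. The case $m=1$ is immediate: $\alpha(Z)=1$ means a linear form vanishes on $Z$, i.e.\ $Z$ is collinear. So assume $m\ge 2$ and choose a nonzero $F\in I(2Z)_{m+1}$. Since $I^{(2)}$ is locally $\mathfrak m_P^2$ at each reduced point $P$ (independent of the characteristic), the plane curve $V(F)$ has degree $m+1$ and multiplicity at least $2$ at every point of $Z$. The first tool is the classical singularity bound: a reduced plane curve of degree $n$ satisfies $\sum_P\binom{\mathrm{mult}_P}{2}\le\binom{n}{2}$, with equality only if the curve is a union of $n$ lines. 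Taking $n=m+1$ and using $\mathrm{mult}_P F\ge 2$ on $Z$ yields $|Z|\le\binom{m+1}{2}$.

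The core of the proof is to show this bound is saturated and that the extremal geometry is exactly a star configuration. The plan is to constrain $F$ using its first partials, which in characteristic $0$ lie in $I(Z)_m$ and are therefore minimal-degree curves through $Z$; combined with Euler's identity $(m+1)F=xF_x+yF_y+zF_z$, a small space $I(Z)_m$ forces a factorization $F=\ell\cdot G$ with $\ell$ linear and $G\in I(Z)_m$. Each point of $Z$ lying off $V(\ell)$ must then be a singular point of the residual curve $V(G)$, which has degree $m$ and passes through those points; this sets up an induction that peels off one line at a time and ultimately writes $F=L_1\cdots L_{m+1}$ with the $L_i$ in general position. Once this is achieved, $V(F)$ has exactly $\binom{m+1}{2}$ nodes, $Z$ must be this entire node set (any omitted node would let a curve of degree $<m$ pass through $Z$, or a doubly-vanishing curve of degree $<m+1$ exist, contradicting minimality), and $Z$ is the claimed star configuration with $d=m+1$.

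I expect the main obstacle to be exactly this saturation step. The Bézout count delivers only the upper bound $|Z|\le\binom{m+1}{2}$; producing the matching lower bound and, with it, the rigidity that pins $F$ down as a general arrangement of $m+1$ lines is delicate. The technical points to control are: the dimension of $I(Z)_m$ (so that the Euler factorization is available), the reducedness of $F$ (a repeated factor $H^2\mid F$ would have $\deg H<m=\alpha(Z)$ and so cannot supply the double vanishing along all of $Z$), and the absence of concurrences among the lines. Each of these must be forced from the two minimality hypotheses $\alpha(Z)=m$ and $\alpha(2Z)=m+1$ acting together, and organizing the induction so that these constraints propagate cleanly is where the real work lies.
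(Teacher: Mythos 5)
This theorem is not proved in the paper at all: it is imported verbatim from Bocci--Chiantini (their Example 3.1, Proposition 3.2, and Theorem 3.3), and the only argument the paper supplies is an informal verification of the easy direction for star configurations (exhibiting $F=L_1\cdots L_d$ and $G=F/L_i$ and appealing to the fact that their minimality ``is known''). Your ($\Leftarrow$) direction is fine and in fact more complete than the paper's discussion --- the B\'ezout restriction argument showing $\alpha(Z)\ge d-1$ and the observation that $L_1\cdots L_d\in I(2Z)$ together with $t\ge 1$ pin down the type --- though you should also note explicitly that the product of any $d-1$ of the lines already vanishes on $Z$, which is what gives $\alpha(Z)\le d-1$.

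The ($\Rightarrow$) direction, which is the entire content of the theorem, is not a proof but a plan, and you say so yourself. The two load-bearing steps are asserted rather than established. First, the claim that ``a small space $I(Z)_m$'' combined with the Euler identity ``forces a factorization $F=\ell\cdot G$ with $\ell$ linear'' is exactly the statement that needs proof: nothing you have written bounds $\dim I(Z)_m$ or explains why the three partials $F_x,F_y,F_z$ (which need not even be linearly dependent) share a common factor with $F$. Second, the induction that ``peels off one line at a time'' is not set up: you would need to show that the residual curve $V(G)$ of degree $m$ is again singular along the part of $Z$ off $V(\ell)$ \emph{and} that the minimality hypotheses $\alpha(Z)=m$, $\alpha(2Z)=m+1$ descend to the residual configuration, and neither is addressed. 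There is also a characteristic issue: your use of first partials is explicitly restricted to characteristic $0$, whereas the paper works over an algebraically closed field of arbitrary characteristic (and itself flags, in its discussion of $t\ge 1$, that all partials of $F$ can vanish identically in characteristic $p$). Since the paper simply cites Bocci--Chiantini for this classification, the honest comparison is that your proposal reconstructs the statement's easy half and outlines, without completing, a strategy for the hard half.
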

		That $\alpha(Z) = 1$ and $\alpha(2Z) = 2$ when $Z$ is a set of collinear points is clear.
		A star configuration of points in $\P^2$ is the finite subset $Z$ of ${d\choose 2}$ points of pairwise intersection of $d$ lines, where $d\geq 3$.
		See Figure \ref{FigStarConfig2} for a star configuration $Z$ when $d=5$.
		Let $F$ be the product of the five linear forms corresponding to the lines, and $G$ be the form $F$ divided by one of the linear forms.
		Then it is clear that $F$ vanishes to order 2 at each of the 10 points and $G$ vanishes to order at least 1 at each point; it is known that $F$ and $G$ are forms of minimal degree vanishing to order 2 and 1, respectively, which means that $Z$ has type $(4,5)$.

		\begin{figure}\label{FigStarConfig2}
			\centering
			\begin{tikzpicture}[scale=1]
				\filldraw [black] 
				(11/13,6/13) circle (2pt)
				(5/3,2/3) circle (2pt)
				(9/7,-6/7) circle (2pt)
				(4/3,4/3) circle (2pt)
				(16/9,10/9) circle (2pt)
				(2/5,9/5) circle (2pt)
				(7/3,5/6) circle (2pt)
				(3/4,3/4) circle (2pt)
				(2,2) circle (2pt)
				(1/3,1/3) circle (2pt);
				\draw (-1,-1) -- (3.1,3.1);
				\draw (-1,2.5) -- (3,.5); 
				\draw (0,3) -- (4.2/3,-1.2) ;
				\draw (-1,0) -- (3,1) ;
				\draw (1.2,-1.2) -- (9/4,3); 
			\end{tikzpicture}
			\caption{A star configuration formed by the pairwise intersection of 5 lines in $\P^2$.}
		\end{figure}
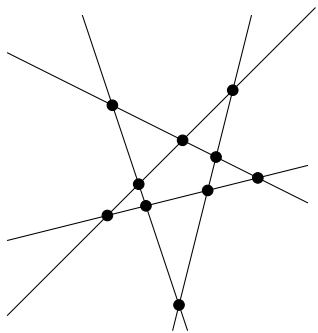


		When $t=2$, Bocci and Chiantini also obtain classification results, though these are much more complicated. 
		The situation can be roughly described as follows: either $\alpha(2Z) = 4$ and $Z$ lies in a conic, or $\alpha(2Z) > 4$ and $Z$ lies in the nodes of the union of rational curves.
		
		There are several possible avenues for generalizing these results; the first we consider is to look at higher symbolic powers.
		We borrow the following notation from \cite{Dumnicki2}:
		\begin{notation}
			Let $Z\subseteq \P^2$ be a finite fixed set of arbitrary points.
			Then we use the notation $\alpha_{m,n}(Z):= \alpha(I^{(m)}) - \alpha(I^{(n)})$ for $m > n$.
		\end{notation}
		In \cite{Dumnicki2}, Dumnicki et al.\ obtain stronger results by requiring the successive differences $\alpha_{m+1,m}$ to be constant as $m$ increases.

		They then prove:
		
		\begin{thm}[Theorems 3.1 and 4.14 of \cite{Dumnicki2}]\label{ThmDumnMain}
			If
			\[
				\alpha_{2,1}(Z) = \alpha_{3,2}(Z) = \cdots = \alpha_{t+1,t}(Z) = d,
			\]
			then
			\begin{enumerate}
				\item for $d=1$ and $t \geq 2$ the set $Z$ is contained in a line, i.e., $\alpha(Z) = 1$;
				\item for $d=2$ and $t \geq 4$ the set $Z$ is contained in a conic, i.e., $\alpha(Z) = 2$.
			\end{enumerate}
			Moreover, both results are sharp, i.e., there are examples showing that one cannot relax the assumptions on $t$.
		\end{thm}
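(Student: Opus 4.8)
The plan is to determine $\alpha(Z)$ by trapping the Waldschmidt constant $\gamma(I)$ between an upper bound read off from the hypothesis and a general lower bound, where $I=I(Z)$ and I abbreviate $a_m:=\alpha(I^{(m)})$. Two facts drive everything. First, since $I^{(p)}I^{(q)}\subseteq I^{(p+q)}$, the sequence $(a_m)$ is subadditive, so by Fekete's lemma $\gamma(I)=\inf_m a_m/m$; in particular $\gamma(I)\le a_n/n$ for every $n$, and taking $p=q=1$ also gives $a_1\ge d$ once $a_2=a_1+d$. Second, for reduced points in $\P^2$ one has the Chudnovsky-type lower bound $\gamma(I)\ge\frac{\alpha(I)+1}{2}$, valid in arbitrary characteristic, with equality exactly for the extremal configurations (a line, or a star configuration).

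Unwinding the hypothesis, the equalities $\alpha_{m+1,m}(Z)=d$ for $m=1,\dots,t$ say $a_m=a_1+(m-1)d$ for $1\le m\le t+1$, so $a_{t+1}=a_1+td$. Feeding $n=t+1$ into the upper bound and combining with Chudnovsky yields
\[
\frac{a_1+1}{2}\le\gamma(I)\le\frac{a_1+td}{t+1},
\]
which rearranges to $a_1(t-1)\le t(2d-1)-1$. For $d=1$ this is $a_1(t-1)\le t-1$, so $a_1\le 1$ as soon as $t\ge 2$; since always $a_1\ge 1$, I conclude $\alpha(Z)=1$ and $Z$ lies on a line, settling (1). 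For $d=2$ the same inequality gives $a_1\le 3+\frac{2}{t-1}$, and the threshold $t\ge 4$ is exactly the point at which $\frac{2}{t-1}<1$ forces $a_1\le 3$; together with $a_1\ge d=2$ this leaves only $\alpha(Z)\in\{2,3\}$. Thus the clean sandwich reduces (2) to excluding the single borderline value $\alpha(Z)=3$.

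The hard part will be precisely this exclusion, because when $\alpha(Z)=3$ the Chudnovsky bound is met with equality ($\frac{3+1}{2}=2=\gamma(I)$), so no numerical slack survives and one must use geometry. I would fix a minimal cubic $C=\{F=0\}$ through $Z$ and run a B\'ezout analysis against a minimal $G\in I^{(m)}$ of degree $2m+1$ for growing $m\le t+1$: when $F\nmid G$,
\[
3(2m+1)\ge\sum_P\operatorname{mult}_P(F)\operatorname{mult}_P(G)\ge m\,|Z|,
\]
bounding $|Z|$, after which I would split into the cases where $C$ is an irreducible (smooth elliptic or rational) cubic versus reducible (three lines, or a conic plus a line) and show in each that the first differences of $(a_m)$ cannot remain equal to $2$ all the way to $m=t+1$ once $t\ge 4$. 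This case analysis, tying back to the union-of-rational-curves picture of the Bocci--Chiantini $t=2$ classification, is the technical heart (the $\P^2$ source's Theorem~4.14 in \cite{Dumnicki2}) and is where the larger threshold is genuinely needed. Finally, sharpness is exhibited by the configurations that saturate the Chudnovsky bound: for $d=1$ a star configuration of lines has $\alpha_{2,1}=1$ yet is not collinear, so $t=1$ is insufficient; for $d=2$ an analogous extremal configuration with $\alpha(Z)=3$ keeps the first differences equal to $2$ through exactly three steps but not four, showing $t\ge 4$ cannot be relaxed.
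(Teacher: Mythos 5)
First, note that the paper does not actually prove this statement: it is imported verbatim as Theorems 3.1 and 4.14 of \cite{Dumnicki2}, so there is no in-paper argument to compare yours against; your proposal has to stand on its own.

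On its own terms, part (1) of your argument is complete and correct: subadditivity of $m\mapsto\alpha(I^{(m)})$ gives $\gamma(I)\le\alpha(I^{(t+1)})/(t+1)=(\alpha(I)+t)/(t+1)$, Chudnovsky's bound for points in $\P^2$ gives $\gamma(I)\ge(\alpha(I)+1)/2$, and the resulting inequality $\alpha(I)(t-1)\le t-1$ forces $\alpha(I)=1$ for $t\ge 2$. (You should cite a characteristic-free source for the Chudnovsky bound, since the theorem is stated over an arbitrary algebraically closed field, but the bound is indeed available.) The genuine gap is in part (2). Your sandwich only yields $\alpha(I)\in\{2,3\}$ for $t\ge 4$, and you explicitly defer the exclusion of $\alpha(I)=3$ to a B\'ezout analysis that is never carried out: you do not handle the case where the minimal cubic $F$ divides every minimal-degree element of $I^{(m)}$ (so the inequality $3(2m+1)\ge m|Z|$ is simply unavailable), and when $F\nmid G$ the bound $|Z|\le 6+3/m$ still leaves a nontrivial list of small configurations and a case split over the possible cubics, none of which is resolved. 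Since ruling out $\alpha(I)=3$ is precisely the content of Theorem 4.14 of \cite{Dumnicki2} --- and precisely where the hypothesis $t\ge 4$ rather than $t\ge 3$ is used --- the proposal does not prove statement (2). The sharpness claims are likewise only half done: the $d=1$ example (a star configuration, which has $\alpha_{2,1}=1$ but $\alpha(Z)=d-1>1$) is fine, but for $d=2$ you assert the existence of a configuration with $\alpha_{2,1}=\alpha_{3,2}=\alpha_{4,3}=2$ not lying on a conic without exhibiting one.
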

		The authors believe that such a result should be true for cubics as well.
		
		Another recently-explored avenue is to points in $\P^1\times\P^1$; in \cite{PntsFttnngInP1xP1}, the authors extend the results of \cite{BocciChiantini,Dumnicki2} to bi-homogeneous ideals over $\P^1\times \P^1$.
		
		A third avenue for generalizing the results of Bocci and Chiantini is to consider subschemes of higher dimensional projective spaces, and this is the direction we will take in the remainder of this note. 
		However, rather than look at point configurations, we will examine configurations of lines in $\P^3$.
		With some additional reasonable assumptions, we are able to reduce to Bocci and Chiantini's results to describe configurations of lines in $\P^3$ for which $t=1$.

	\section{Lines in $\P^3$}

		Throughout the remainder, let $S = k[\P^3] = k[x,y,z,w]$ and $R = k[\P^2] = k[x,y,z]$ be the homogeneous coordinate rings of $\P^3$ and $\P^2$, respectively.

		Broadly speaking, the two types of configurations of lines in $\P^3$ we will discuss are the coplanar configurations and the pseudo-star configurations.

		\begin{defn}
			A pseudo-star configuration (or pseudostar) of lines in $\P^3$ is a finite collection of lines formed by the pairwise intersection of hyperplanes such that no three of the hyperplanes meet in a line.
		\end{defn}

		There is a growing body of literature on the study of star configurations (see \cite{GHM} and the references therein).
		Indeed, star configurations were one of the first examples studied in \cite{BocciHarbourne1} in which the resurgence $\rho(I)$ was introduced.
		The easiest examples, of course, are star configurations of points in $\P^2$, but star configurations can be defined in any codimension in any projective space.
		
		%
		%
		
		As defined in \cite{GHM}, a star configuration of lines in $\P^3$ is a collection of lines formed by the pairwise intersections of hyperplanes which meet properly, meaning that the intersection of any $j$ of the hyperplanes is empty or has codimension $j$.
		For the case of the pseudostars, we replace the requirement that the planes meet properly with the requirement that no three of the planes meet in a line; therefore, it may be that in a pseudostar in $\P^3$, more than three planes meet in a single point.
		
		The easiest example of a pseudostar in $\P^3$ is a star configuration of lines.

		Another easy example of a pseudostar in $\P^3$ is a projective cone over a star configuration of points in $\P^2$:

		\begin{ex}
			Suppose $I\subseteq R$ defines a star configuration $Z$ of points in $\P^2$.
			The projective cone over $Z$ is a subscheme of $\P^3$ defined by the extension $IS$ of $I$ to $S$.
			This is an example of a pseudostar.
		\end{ex}
		
		The proof of our main theorem will be powered by the notion of arithmetically Cohen-Macaulay subschemes.
		In particular, for an ACM subscheme $X\subseteq \P^N$ we will exploit the relationship between $\alpha(X)$ and $\alpha(X\cap H)$, where $H$ is a general hyperplane.
		%
		\begin{defn}
			A subscheme $X\subseteq \P^N$ is arithmetically Cohen-Macaulay (ACM) if the homogeneous coordinate ring $k[\P^N]/I(X)$ of the subscheme is Cohen-Macaulay.
		\end{defn}

		Several familiar linear configurations are ACM.
		
		\begin{lem}
			Any collection of coplanar lines in $\P^3$ is ACM.
		\end{lem}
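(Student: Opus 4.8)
The plan is to exhibit the union $X$ of a collection of coplanar lines as a complete intersection of two forms and then invoke the standard fact that complete intersections are Cohen--Macaulay. After a linear change of coordinates I may assume the common plane is $H = V(w)$, so that $S/(w) \isomorphic R = k[x,y,z]$ is the homogeneous coordinate ring of $H \isomorphic \P^2$. Writing the lines as $L_1,\dots,L_r$, each $L_i \subseteq H$ is cut out on $H$ by a single linear form $m_i \in R$, which I may take to be pairwise non-proportional. I would then set $f = m_1\cdots m_r \in R \subseteq S$, a form of degree $r$, and claim that $I(X) = (w,f)$.

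The key step is to show that $w,f$ form a regular sequence in $S$, so that $(w,f)$ is a genuine complete intersection. The form $w$ is a nonzerodivisor because $S$ is a domain, and the image $\bar f = \bar m_1 \cdots \bar m_r$ is a nonzero element of the domain $R = S/(w)$, hence a nonzerodivisor on $S/(w)$. Thus $(w,f)$ has height $2$ and $S/(w,f)$ is Cohen--Macaulay of Krull dimension $\dim S - 2 = 2$. Set-theoretically the equality $V(w,f) = \bigcup_i V(w,m_i) = \bigcup_i L_i = X$ is immediate, and the non-proportionality of the $m_i$ makes the union reduced.

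It remains to confirm that $(w,f)$ is actually the saturated ideal $I(X)$, and this is the step I expect to be the main obstacle, since a priori the saturated ideal could be strictly larger. I would dispatch it by a depth argument: because $S/(w,f)$ is Cohen--Macaulay of dimension $2$, its depth at the irrelevant maximal ideal $\mathfrak{m} = (x,y,z,w)$ equals $2 \geq 1$, so $\mathfrak{m}$ is not an associated prime of $S/(w,f)$, whence $(w,f)$ is saturated. Therefore $I(X) = (w,f)$, the ring $S/I(X)$ is Cohen--Macaulay, and $X$ is ACM. (The single-line case $r=1$ is the special instance where $X = V(w,m_1)$ is literally the intersection of two hyperplanes.) The content of the lemma thus reduces entirely to recognizing coplanar lines as a plane curve, i.e. a complete intersection of the ambient plane with one additional form.
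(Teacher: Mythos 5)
Your proof is correct and follows the same route as the paper, which simply asserts that the ideal of coplanar lines is a complete intersection and hence Cohen--Macaulay. You supply the details the paper leaves implicit --- exhibiting the ideal as $(w, m_1\cdots m_r)$, checking the regular sequence, and verifying saturation and reducedness so that this really is the saturated ideal $I(X)$ --- all of which is sound.
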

		
		\begin{proof}
			If $I\subseteq S$ is the ideal of coplanar lines, then $I$ is a complete intersection ideal, and thus $S/I$ is Cohen-Macaulay.
		\end{proof}

		\begin{lem}
			Let $\LL$ denote a finite union of lines in $\P^3$.
			If $\LL$ is a star configuration of lines in $\P^3$ or a projective cone over a star configuration of points in $\P^2$, $\LL$ and $2\LL$ are ACM.
		\end{lem}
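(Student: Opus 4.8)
The plan is to establish in each case that the relevant homogeneous coordinate ring is Cohen--Macaulay, using a flat base-change argument for the projective cone and the known resolution of a genuine star configuration for the other case. In both situations the schemes are codimension $2$ in $\P^3$, so by the Auslander--Buchsbaum formula it suffices to show that the defining ideal admits a free resolution of length $2$, equivalently that $S/I$ has depth equal to its dimension $2$.

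For the cone, write $\LL$ for the cone over a star configuration $Z\subseteq\P^2$ of points, so that $I(\LL)=IS$ with $I=I(Z)\subseteq R$ and $S=R[w]$. Since $S$ is a polynomial extension of $R$ we have $S/IS\cong (R/I)[w]$, and a polynomial ring over a Cohen--Macaulay ring is Cohen--Macaulay; as $Z$ is a finite set of points, $R/I$ is Cohen--Macaulay, so $\LL$ is ACM. For $2\LL$ I would first verify that symbolic powers are compatible with this extension, namely $(IS)^{(2)}=I^{(2)}S$; this holds because the associated primes of $IS$ are exactly the extensions of the primes in $\Ass(I)$ and localization commutes with the flat adjunction of $w$, so the identity can be checked prime-by-prime. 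Then $S/I(2\LL)=S/I^{(2)}S\cong (R/I^{(2)})[w]$, and because $I^{(2)}$ is a saturated ideal cutting out a $0$-dimensional subscheme of $\P^2$, the ring $R/I^{(2)}$ has depth $1$, equal to its dimension, and is Cohen--Macaulay; the polynomial extension is then Cohen--Macaulay as well.

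The star-configuration case is the main obstacle, and here I would appeal to the structural results of \cite{GHM}. A star configuration of lines in $\P^3$ is the codimension-$2$ star configuration cut out by $s$ planes meeting properly, with defining ideal the intersection $\bigcap_{i<j}(L_i,L_j)$ of the ideals of its component lines; for the symbolic square the star-configuration machinery gives $I(\LL)^{(2)}=\bigcap_{i<j}(L_i,L_j)^2$. The delicate point is that, unlike for the cone, neither $S/I(\LL)$ nor $S/I(\LL)^{(2)}$ can be obtained by a base change, so one genuinely needs the explicit minimal free resolutions of these ideals supplied by \cite{GHM}. Each such resolution has length equal to the codimension $2$, whence $S/I(\LL)$ and $S/I(\LL)^{(2)}$ have depth $2$, equal to their dimension, and both $\LL$ and $2\LL$ are ACM. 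I expect the fattening $2\LL$ to be the hardest ingredient, since controlling the generators and syzygies of the symbolic square requires the full combinatorial description of the star configuration rather than any formal reduction.
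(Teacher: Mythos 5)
Your proof is correct and follows essentially the same route as the paper: the cone case is handled by the isomorphism $S/I(Z)S\cong (R/I(Z))[w]$ and the fact that a polynomial extension of a Cohen--Macaulay ring is Cohen--Macaulay, and the star-configuration case is delegated to the structural results of \cite{GHM} (the paper cites Proposition 2.9 and Theorem 3.1 there). Your explicit verification that $(IS)^{(2)}=I^{(2)}S$ is a useful detail that the paper leaves implicit in the phrase ``a similar argument can be carried out.''
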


		\begin{proof}
			If $\LL$ is a star configuration of lines in $\P^3$, then $\LL$ and $2\LL$ are ACM by \cite[Proposition 2.9 and Theorem 3.1]{GHM}, respectively.
			Suppose $\LL$ is a projective cone over a star configuration $Z$ in $\P^2$.
			Then $I(\LL) = I(Z)S$, and $(R/I(Z))[w] \isomorphic S/I(Z)S = S/I(\LL)$.
			Since $R/I(Z)$ is Cohen-Macaulay, so is $(R/I(Z))[w]$, and hence also $S/I(\LL)$. 
			Therefore $\LL$ is ACM. 
			A similar argument can be carried out for $I(2Z) = (I(Z))^{(2)}$.
		\end{proof}




		\begin{prop}\label{PropPseudostarsACM}
			Pseudostars and their symbolic squares are ACM.
		\end{prop}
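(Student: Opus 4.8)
The plan is to reduce the statement to the already-understood case of star configurations of points in $\P^2$ by passing to a general hyperplane section, and then transfer the Cohen--Macaulay property \emph{upward} via the behaviour of depth under a general linear form. Write the pseudostar as $\LL = \bigcup_{i<j}(H_i\cap H_j)$, where $H_1,\dots,H_s$ are the planes with defining linear forms $\ell_1,\dots,\ell_s\in S$. The defining condition ``no three planes meet in a line'' is equivalent to the linear-algebra statement that every three of the $\ell_i$ are linearly independent: three planes share a line exactly when their forms are linearly dependent. Consequently $I(\LL)=\bigcap_{i<j}(\ell_i,\ell_j)$ is unmixed of codimension $2$, and since each $(\ell_i,\ell_j)$ is a linear complete intersection we have $I(2\LL)=I(\LL)^{(2)}=\bigcap_{i<j}(\ell_i,\ell_j)^2$.

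First I would identify the general hyperplane section. Let $H\cong\P^2$ be a general plane with linear form $\lambda$, and let $h_i=H_i\cap H$ be the restricted lines. Each line $H_i\cap H_j$ meets $H$ in a single point $(H_i\cap H_j)\cap H=h_i\cap h_j$. Because any three of the $\ell_i$ are independent, any three planes meet in at most a point, so only finitely many points of $\P^3$ lie on three or more planes; a general $H$ avoids all of them. Hence no three of the $h_i$ are concurrent, and the $\binom{s}{2}$ points $h_i\cap h_j$ form a star configuration $Z\subseteq H$, while transversality of $H$ gives $(2\LL)\cap H=2Z$ scheme-theoretically. Now $Z$, being a finite reduced set of points, is ACM, and $2Z$ is ACM by the results on star configurations in \cite{GHM}.

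Next I would transfer these properties upward. Since $\lambda$ is general it lies in none of the associated (minimal) primes $(\ell_i,\ell_j)$ of $I(\LL)$ or of $I(2\LL)$, so it is a nonzerodivisor on both $S/I(\LL)$ and $S/I(2\LL)$. Writing $\bar S=S/(\lambda)=k[\P^2]$ and $\bar I$ for the image of an ideal $I$, the standard identity $\operatorname{depth}(M)=\operatorname{depth}(M/\lambda M)+1$ for a nonzerodivisor $\lambda$ shows that $S/I(\LL)$ is Cohen--Macaulay (of depth $2$) if and only if $\bar S/\overline{I(\LL)}$ is Cohen--Macaulay (of depth $1$), and likewise for $2\LL$. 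As $\bar S/\overline{I(\LL)}$ is one-dimensional, it is Cohen--Macaulay exactly when $\overline{I(\LL)}$ is saturated, i.e. when $\overline{I(\LL)}=I(Z)$ (and similarly $\overline{I(2\LL)}=I(2Z)$). Thus everything comes down to showing that reduction modulo $\lambda$ carries $I(\LL)$ onto the full ideal $I(Z)$, and $I(2\LL)$ onto $I(2Z)$.

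\emph{The main obstacle is precisely this surjectivity of restriction.} I would establish it by pinning down generators: the key lemma is that $\bigcap_{i<j}(\ell_i,\ell_j)$ is generated by the squarefree products $\prod_{i\in T}\ell_i$ with $|T|=s-1$, the three-wise independence of the $\ell_i$ being exactly what is needed to prove it (by induction on $s$, or by recognising $\LL$ as a codimension-two star configuration in the sense of \cite{GHM} and quoting its generators and resolution). Granting this, each such product reduces modulo $\lambda$ to $\prod_{i\in T}\bar\ell_i$, and these reductions generate $I(Z)$ because $Z$ is the corresponding star configuration of points in $\P^2$; hence $\overline{I(\LL)}=I(Z)$ and $\LL$ is ACM. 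The same scheme of argument applies to $2\LL$, now using the analogous description of the generators of $I(\LL)^{(2)}=\bigcap_{i<j}(\ell_i,\ell_j)^2$ and the fact that their reductions generate $I(2Z)$. Verifying this generation for the symbolic square, and checking that forming the symbolic square genuinely commutes with the general hyperplane section, is where the real work lies.
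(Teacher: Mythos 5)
Your architecture is sound, and it is genuinely different from what the paper does: the paper's proof of this proposition is a pure citation to \cite{GHM} (Proposition 2.9 together with Remark 2.13 for the reduced case, and the first part of the proof of Theorem 3.2 for the symbolic square, with the observation that ``meet properly'' can be relaxed to ``no three hyperplanes contain a line''). Your soft steps all check out: $I(\LL)=\bigcap_{i<j}(\ell_i,\ell_j)$ and $I(2\LL)=\bigcap_{i<j}(\ell_i,\ell_j)^2$ have no embedded primes, so a general linear form $\lambda$ is a nonzerodivisor on both quotients; the depth identity correctly reduces Cohen--Macaulayness to saturatedness of the restricted ideals; and the general plane section of a pseudostar is indeed a star configuration of points (respectively its double scheme), since a general plane avoids the finitely many points lying on three or more of the $H_i$.

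The problem is that the step you flag as ``where the real work lies'' is not a loose end but the entire content of the proposition, and your plan for discharging it is close to circular. The generation of $\bigcap_{i<j}(\ell_i,\ell_j)$ by the $s$ squarefree products of $s-1$ of the $\ell_i$ is obtained in \cite{GHM} together with (essentially as part of) the resolution that establishes ACM-ness, so ``recognising $\LL$ as a codimension-two star configuration in the sense of \cite{GHM} and quoting its generators and resolution'' quotes the very statement being proved. To make your route self-contained you must actually carry out the induction on $s$ for the generation lemma, checking that three-wise linear independence (rather than general position) suffices --- which is precisely the point of \cite[Remark 2.13]{GHM} and of the paper's parenthetical about relaxing the hypotheses. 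The symbolic-square half is strictly harder and is left entirely open in your sketch: you need an explicit generating set for $\bigcap_{i<j}(\ell_i,\ell_j)^2$ (this is what the first part of the proof of \cite[Theorem 3.2]{GHM} supplies) and the verification that its reduction modulo $\lambda$ generates $I(2Z)$; neither is routine, and without them the argument proves nothing about $2\LL$. In short: correct reduction, but the two load-bearing lemmas are exactly the cited results of \cite{GHM}, and you have not supplied independent proofs of them.
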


		\begin{proof}
			The reduced case was proved, though not explicitly, in \cite[Proposition 2.9]{GHM} (but see \cite[Remark 2.13]{GHM}).
			The symbolic square case can be found in the first part of the proof of \cite[Theorem 3.2]{GHM}, as the assumption that the hyperplanes meet properly can be relaxed to the assumption that no three hyperplanes contain a line.
		\end{proof}

		\begin{prop}[Corollary 1.3.8 of \cite{MiglioreLiaisonTheoryBook}]\label{PropHypSecsEqualAlphas}
			Let $X\subseteq \P^N$ be an arithmetically Cohen-Macaulay scheme of dimension at least 1, and suppose $H\subseteq \P^N$ is a general hyperplane.
			Let $X\cap H$ denote the general hyperplane section of $X$, $S=k[\P^N]$, and $R = S/I(H) \,\isomorphic\, k[\P^{N-1}]$.
			Then the Hilbert function of $R/I(X\cap H)$ is given by
			\[
			H(R/I(X\cap H),t) = H(S/I(X),t) - H(S/I(X),t-1).
			\]
		\end{prop}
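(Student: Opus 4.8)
The plan is to exploit the Cohen--Macaulay hypothesis to produce a single linear form that behaves like a regular element, and then read off the Hilbert function from a short exact sequence. Write $A = S/I(X)$. Since $X$ is ACM, $A$ is Cohen--Macaulay, and since $\dim X \geq 1$ its Krull dimension is $\dim A = \dim X + 1 \geq 2$; in particular $\operatorname{depth} A = \dim A \geq 2$. The first step is to choose the general hyperplane $H$ so that its defining linear form $\ell \in S_1$ is a nonzerodivisor on $A$.

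For this I would invoke prime avoidance at the level of linear forms. Because $A$ is Cohen--Macaulay it is unmixed, so every associated prime $P \in \Ass(A)$ has $\dim(S/P) = \dim A \geq 2$; in particular no associated prime is the irrelevant ideal $\mathfrak m$, so no $P$ contains all of $S_1$. Hence for each $P$ the linear forms lying in $P$ form a proper subspace of $S_1$, and (as $k$ is infinite) a general $\ell \in S_1$ avoids every associated prime. For such $\ell$, multiplication by $\ell$ is injective on $A$, giving the short exact sequence of graded modules
\[
0 \longrightarrow A(-1) \xrightarrow{\ \cdot \ell\ } A \longrightarrow A/\ell A \longrightarrow 0.
\]
Comparing degree-$t$ pieces and using additivity of dimensions yields
\[
H(A/\ell A, t) = H(S/I(X),t) - H(S/I(X),t-1),
\]
which is the right-hand side of the claimed identity.

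It remains to identify $A/\ell A$ with $R/I(X \cap H)$, and this is the step I expect to be the crux. Setting $R = S/(\ell)$ and $\bar J = (I(X)+(\ell))/(\ell)$, we have $R/\bar J \isomorphic A/\ell A$, and $\bar J$ defines the scheme $X \cap H$; the only question is whether $\bar J$ is already the saturated ideal $I(X\cap H)$. Here is where both hypotheses are used: since $\ell$ is a nonzerodivisor on the Cohen--Macaulay ring $A$, the quotient $A/\ell A$ is again Cohen--Macaulay, of dimension $\dim A - 1 = \dim X \geq 1$, whence $\operatorname{depth}(A/\ell A) \geq 1$. Therefore the irrelevant ideal is not an associated prime of $A/\ell A = R/\bar J$, which is exactly the statement that $\bar J$ is saturated. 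A saturated ideal defining $X \cap H$ is $I(X\cap H)$, so $\bar J = I(X \cap H)$ and $H(R/I(X\cap H),t) = H(A/\ell A, t)$. Combining with the displayed Hilbert function computation completes the proof. The delicate point throughout is that ``general'' must be chosen to simultaneously avoid the finitely many associated primes of $A$; the hypothesis $\dim X \geq 1$ is precisely what forces these primes to have positive dimension, so that such a general linear form exists and, after passing to the quotient, leaves an already-saturated ideal.
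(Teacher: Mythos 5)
Your argument is correct and complete: prime avoidance gives a linear nonzerodivisor on $S/I(X)$ because the Cohen--Macaulay hypothesis with $\dim X\geq 1$ rules out the irrelevant ideal as an associated prime, the short exact sequence gives the difference of Hilbert functions, and the depth count $\operatorname{depth}(A/\ell A)\geq 1$ correctly identifies $(I(X)+(\ell))/(\ell)$ as the already-saturated ideal $I(X\cap H)$. The paper offers no proof of this statement --- it is quoted verbatim as Corollary 1.3.8 of Migliore's liaison theory book --- and what you have written is essentially the standard proof of that corollary, so there is nothing to reconcile.
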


		A useful corollary of Proposition \ref{PropHypSecsEqualAlphas} is the following.
		
		\begin{cor}\label{CorHypSecsEqualAlphas}
			Suppose $X\subseteq \P^N$ is an arithmetically Cohen-Macaulay scheme of dimension at least 1, and $H\subseteq \P^N$ is a general hyperplane.
			If $X\cap H$ denotes the general hyperplane section of $X$, then $\alpha(X) = \alpha(X\cap H)$.
		\end{cor}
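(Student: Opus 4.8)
The plan is to deduce this directly from Proposition \ref{PropHypSecsEqualAlphas} together with the characterization of $\alpha$ recalled in Section 1: namely, $\alpha(X) = \alpha(I(X))$ is the least degree $t$ at which $H(S/I(X),t)$ falls below $\dim_k S_t = \binom{t+N}{N}$, and for every $t < \alpha(X)$ one has the equality $H(S/I(X),t) = \binom{t+N}{N}$. Since $I(X)_t \subseteq S_t$, the Hilbert function of the quotient can only drop below that of $S$, so the first deviation, occurring at $t = \alpha(X)$, is \emph{strict}; this strictness is what I will use in the lower-bound direction.

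The elementary fact that drives the computation is the binomial (Pascal) identity
\[
\binom{t+N}{N} - \binom{t-1+N}{N} = \binom{t+N-1}{N-1} = \dim_k R_t,
\]
where $R \isomorphic k[\P^{N-1}]$; in words, the first difference of the Hilbert function of $k[\P^N]$ is precisely the Hilbert function of $k[\P^{N-1}]$. First I would treat the range $t < \alpha(X)$: substituting the equalities $H(S/I(X),t) = \binom{t+N}{N}$ and $H(S/I(X),t-1) = \binom{t-1+N}{N}$ into the identity of Proposition \ref{PropHypSecsEqualAlphas} yields $H(R/I(X\cap H),t) = \dim_k R_t$, so that $I(X\cap H)_t = 0$ throughout this range, giving $\alpha(X\cap H) \geq \alpha(X)$. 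Next I would examine $t = \alpha(X)$, where the minuend satisfies $H(S/I(X),\alpha(X)) < \binom{\alpha(X)+N}{N}$ strictly, while the subtrahend still equals $\binom{\alpha(X)-1+N}{N}$ because $\alpha(X)-1 < \alpha(X)$. The Proposition then forces $H(R/I(X\cap H),\alpha(X)) < \dim_k R_{\alpha(X)}$, i.e.\ $I(X\cap H)_{\alpha(X)} \neq 0$, giving $\alpha(X\cap H) \leq \alpha(X)$. Combining the two inequalities produces the claimed equality.

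Because the argument is a nearly mechanical consequence of Proposition \ref{PropHypSecsEqualAlphas}, I do not expect a serious obstacle; the only points requiring care are confirming that the deviation at $t = \alpha(X)$ is genuinely strict (so that the $\alpha(X\cap H) \leq \alpha(X)$ direction goes through rather than merely an inequality) and checking the low-degree boundary. The latter causes no trouble, since $\alpha(X) \geq 1$ always holds and $H(S/I(X),0) = 1 = \binom{N}{N}$, so no degenerate case in small degree arises, and the hypothesis that $X$ has dimension at least $1$ guarantees the general hyperplane section $X \cap H$ is nonempty and positive-dimensional so that $\alpha(X \cap H)$ is well defined.
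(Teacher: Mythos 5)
Your argument is correct and is precisely the computation the paper compresses into the phrase ``follows immediately from Proposition \ref{PropHypSecsEqualAlphas} and the definitions of the Hilbert function and $\alpha$'': the two-sided comparison via the Pascal identity $\binom{t+N}{N}-\binom{t-1+N}{N}=\binom{t+N-1}{N-1}$, with strictness of the drop at $t=\alpha(X)$ giving the upper bound, is exactly what that remark stands in for. (One immaterial slip: when $\dim X = 1$ the general hyperplane section is zero-dimensional rather than positive-dimensional, but all your argument needs is that it is nonempty so that $I(X\cap H)$ is a proper ideal.)
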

		\begin{proof}
			This follows immediately from Proposition \ref{PropHypSecsEqualAlphas} and the definitions of the Hilbert function and $\alpha$.
		\end{proof}

		\begin{cor}\label{CorPseudoStarEqualAlpha}
			Let $\LL$ be a pseudostar in $\P^3$ formed by the pairwise intersection of $d$ planes, no three of which contain any line.
			Then $\alpha(\LL) = d-1$ and $\alpha(2\LL) = d$.
		\end{cor}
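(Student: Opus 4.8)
The plan is to pass to a general hyperplane section and reduce the statement to the known computation for star configurations of points in $\P^2$. Write the $d$ defining planes of $\LL$ as $\Pi_i = V(F_i)$ with each $F_i$ linear, so that the lines of $\LL$ are $L_{ij} = \Pi_i \cap \Pi_j$ for $1 \le i < j \le d$. Fix a general hyperplane $H \subseteq \P^3$ and set $R = S/I(H) \cong k[\P^2]$. By Proposition \ref{PropPseudostarsACM} both $\LL$ and $2\LL$ are ACM, so Corollary \ref{CorHypSecsEqualAlphas} gives $\alpha(\LL) = \alpha(\LL \cap H)$ and $\alpha(2\LL) = \alpha\big((2\LL) \cap H\big)$. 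Thus it suffices to understand these two sections.

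First I would identify $\LL \cap H$. The planes cut out $d$ lines $\ell_i := \Pi_i \cap H$ in $H \cong \P^2$, and $L_{ij} \cap H = \ell_i \cap \ell_j$ is a single point. Because $H$ is general it avoids the finitely many triple points $\Pi_i \cap \Pi_j \cap \Pi_k$, each of which is a genuine point (not a line) precisely by the pseudostar hypothesis that no three planes share a line; hence no three of the $\ell_i$ are concurrent, and $Z := \LL \cap H$ is a star configuration of $\binom{d}{2}$ points. By Theorem \ref{TheoremBC} such a configuration has type $(d-1,d)$, so $\alpha(Z) = d-1$ and $\alpha(2Z) = d$. Since $\LL$ is reduced its section is exactly $Z$, and Corollary \ref{CorHypSecsEqualAlphas} yields $\alpha(\LL) = \alpha(Z) = d-1$ immediately.

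For $\alpha(2\LL)$ the remaining task is to relate $(2\LL) \cap H$ to $2Z$, and this is the step I expect to be the main obstacle, since symbolic squares need not commute with hyperplane sections in general. Rather than prove the scheme equality $(2\LL) \cap H = 2Z$ outright, I would bracket $\alpha(2\LL)$ from both sides. For the upper bound, the product $F_1 \cdots F_d$ vanishes to order $2$ along each $L_{ij}$: away from the finitely many triple points, $F_i$ and $F_j$ cut $L_{ij}$ transversally while every other $F_k$ is nonvanishing there (as $L_{ij}\not\subseteq \Pi_k$, again by the pseudostar hypothesis), so locally $F_1\cdots F_d = F_iF_j\cdot(\text{unit})$; hence $F_1\cdots F_d \in I(\LL)^{(2)} = I(2\LL)$ and $\alpha(2\LL) \le d$. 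For the lower bound, any form vanishing to order $2$ along every $L_{ij}$ restricts, under the transverse section by $H$, to a form vanishing to order $2$ at every point $\ell_i \cap \ell_j$; thus the image of $I(2\LL)$ in $R$ lies in $I(2Z)$, and taking saturations (and using that $I(2Z)$ is already saturated) gives the ideal containment that forces $\alpha\big((2\LL)\cap H\big) \ge \alpha(2Z) = d$. Since $\alpha(2\LL) = \alpha\big((2\LL)\cap H\big)$ by Corollary \ref{CorHypSecsEqualAlphas}, the two bounds combine to $\alpha(2\LL) = d$, completing the proof. The one point demanding care throughout is the transversality of $H$ to the lines together with its avoidance of the triple points, which is exactly what genericity of $H$ and the pseudostar hypothesis supply.
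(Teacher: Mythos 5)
Your proposal is correct, and its skeleton is the same as the paper's: pass to a general hyperplane section, show $\LL\cap H$ is a star configuration of ${d\choose 2}$ points, and use ACM-ness (Proposition \ref{PropPseudostarsACM} together with Corollary \ref{CorHypSecsEqualAlphas}) to transfer $\alpha$; the computation of $\alpha(\LL)=d-1$ is essentially identical to the one in the text. Where you genuinely differ is the step $\alpha(2\LL)=d$. The paper disposes of it with the chain $d\geq \alpha(2\LL)=\alpha(2(\LL\cap H))>\alpha(\LL\cap H)=d-1$, which leaves the upper bound $d\geq\alpha(2\LL)$ unjustified and silently identifies the hyperplane section of the symbolic square, $(2\LL)\cap H$, with the symbolic square of the section, $2(\LL\cap H)$ --- exactly the identification you flag as not automatic. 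You instead make the upper bound explicit (the product $F_1\cdots F_d$ lies in $I(2\LL)$ because away from the finitely many triple points all the other $F_k$ are units along $L_{ij}$) and replace the questionable equality with the one-sided containment $I\bigl((2\LL)\cap H\bigr)\subseteq I(2Z)$, which gives $\alpha\bigl((2\LL)\cap H\bigr)\geq \alpha(2Z)=d$ and, combined with Corollary \ref{CorHypSecsEqualAlphas}, closes the sandwich. The paper's lower bound needs only the soft fact $t\geq 1$ for point schemes, while yours invokes the full value $\alpha(2Z)=d$ for the star; both suffice, but your version is the more careful one at the single delicate point of the argument, namely the interaction of symbolic powers with hyperplane sections.
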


		\begin{proof}
			We first fix our notation.
			Let $H_1,H_2,\ldots,H_d\subset \P^3$, $d > 2$ (if $d \leq 2$, the lines resulting from the pairwise intersection of the hyperplanes will be coplanar) be hyperplanes, no three of which contain any line.
			Set $\ell_{ij} = H_i\cap H_j$ for all $i < j$, and put $\LL = \bigcup\limits_{1\leq i < j \leq d} \ell_{ij}$.
			Then $\LL$ is a pseudostar.
			
			We first show that $\alpha(\LL) = d-1$.
			By Corollary \ref{CorHypSecsEqualAlphas}, it is enough to show that the general hyperplane sections of $\LL$ form a star configuration of points in $\P^2$.
			
			A general hyperplane $H$ meets each $H_i$ in a line $L_i$; as $H$ is general, $L_i$ meets each $\ell_{ij}$, $j\not=i$ in distinct points $p_{ij}\in H \isomorphic \P^2$.
			The points $p_{ij}$, $j\not=i$, form a star configuration of points in $H\isomorphic \P^2$, as each line $L_i$ contains $d-1$ points $p_{ij}$, $j\not=i$; each point $p_{ij}$ lies on exactly two lines, $L_i$ and $L_j$, hence we have exactly ${d\choose 2}$ points.
			Thus, the general hyperplane sections of $\LL$ form a star configuration.
			
			To see that $\alpha(2\LL) = d$, note that $d\geq \alpha(2\LL) = \alpha(2(\LL\cap H)) > \alpha(\LL\cap H)  = d-1$.
			
			%
		\end{proof}

			The following proposition shows that if a general hyperplane intersects three or more lines in $\P^3$ in collinear points, the lines must lie in a plane.
			We make use of the notion of the dual space of $\P^3$, which we denote $(\P^3)^*$.
			Recall the dual relationship: a point $(a,b,c,d)\in \P^3$ corresponds to a hyperplane $ax+by+cz+dw=0$ in $(\P^3)^*$.

		\begin{prop}\label{PropGenHyperplaneNon}
			A general hyperplane intersects $d\geq 3$ non-coplanar lines in $\P^3$ in $d$ non-collinear points.
		\end{prop}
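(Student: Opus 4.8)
The plan is to prove the contrapositive in spirit, by showing that the set of hyperplanes giving non-collinear points is a nonempty open subset of $(\P^3)^*$, and hence dense. Over the dense open set of hyperplanes $H$ containing none of the $\ell_i$, the intersection points $p_i := \ell_i\cap H$ vary algebraically with $H$, and collinearity of $p_1,\dots,p_d$ is the vanishing of all $3\times 3$ minors of the $d\times 4$ coordinate matrix of the $p_i$, which is a closed condition. Thus the locus of $H$ yielding non-collinear points is open, and since $(\P^3)^*$ is irreducible, this locus is either empty or dense. So it suffices to exhibit a \emph{single} hyperplane $H_0$ meeting the lines in non-collinear points.

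Next I would reduce to three lines. Among $d$ non-coplanar lines there must be three that are non-coplanar: if every triple were coplanar, then choosing distinct $\ell_1,\ell_2$ (which, being coplanar with some third line, meet and span a unique plane $\Pi$), every remaining $\ell_j$ would lie in the plane spanned by $\ell_1$ and $\ell_2$, forcing all the lines into $\Pi$, contrary to hypothesis. Since any collection of collinear points has every three of its members collinear, it is enough to find $H_0$ for which the three points $\ell_1\cap H_0,\ \ell_2\cap H_0,\ \ell_3\cap H_0$ coming from a fixed non-coplanar triple are non-collinear.

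To build $H_0$ I would choose non-collinear points $q_i\in\ell_i$ and set $H_0=\langle q_1,q_2,q_3\rangle$; then $\ell_i\cap H_0=\{q_i\}$ as soon as $\ell_i\not\subseteq H_0$, giving the desired non-collinear section. Such $q_i$ exist: if every triple $(q_1,q_2,q_3)$ were collinear, then fixing distinct $q_1\in\ell_1$, $q_2\in\ell_2$ would force $\ell_3$ to equal the line $\overline{q_1 q_2}$ through them; letting $q_1$ vary along $\ell_1$ would then place two distinct points of $\ell_1$ on $\ell_3$, giving $\ell_1=\ell_3$, a contradiction. Dually, one can phrase this through $(\P^3)^*$, where collinearity of the $p_i$ is equivalent to the existence of a common transversal to the lines lying inside $H$, and the construction amounts to exhibiting a plane containing no such transversal.

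The main obstacle is the last step: guaranteeing that $H_0=\langle q_1,q_2,q_3\rangle$ can be taken to contain none of the $\ell_i$ while the $q_i$ stay non-collinear. This is precisely where non-coplanarity is used, and it is also where a naive count of common transversals to the three lines misleads: when the three lines are concurrent but not coplanar, the transversals through the common point form a two-parameter family, yet none of them lies in a general plane, so they produce no bad hyperplanes. The openness-plus-one-example strategy circumvents this, since it requires only a single good plane rather than control of the whole bad locus.
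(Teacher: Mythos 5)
Your overall strategy---the locus of hyperplanes giving non-collinear sections is open in $(\P^3)^*$, so it suffices to exhibit a single witness, after reducing to a non-coplanar triple of lines---is the same strategy as the paper's proof, and your two reductions (every triple coplanar would force all $d$ lines into one plane; non-collinearity of three of the $d$ points suffices) are sound. The genuine gap is the step you flag yourself and then never close: you must produce a plane $H_0$ containing \emph{none} of $\ell_1,\ell_2,\ell_3$ whose three intersection points are non-collinear, but your construction $H_0=\langle q_1,q_2,q_3\rangle$ only secures the second property. Note that your existence argument for a non-collinear triple $(q_1,q_2,q_3)$ uses nothing beyond the distinctness of the lines, so it applies verbatim to three distinct \emph{coplanar} lines---where the resulting $H_0$ is the common plane, contains all three lines, and the proposition is false. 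So the deferred step is not a technicality; it is the only place the hypothesis can enter, and the closing claim that ``the openness-plus-one-example strategy circumvents this'' is a non sequitur: openness merely reduces the problem to exhibiting one good plane, and exhibiting it is exactly what is missing.

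The gap is fillable. The set $V$ of non-collinear triples is a nonempty open subset of the irreducible variety $\ell_1\times\ell_2\times\ell_3$, so if every triple in $V$ spanned a plane containing one of the lines, then for some fixed $i$ (say $i=1$) this would hold on all of $V$. Fixing $q_2\in\ell_2\setminus\ell_1$ and letting $q_1,q_3$ vary, the span would always be the unique plane $\Pi(q_2)$ through $\ell_1$ and $q_2$, forcing $\ell_3\subseteq\Pi(q_2)$ for every $q_2$; if $\Pi(q_2)$ varies with $q_2$ this gives $\ell_3\subseteq\Pi(q_2)\cap\Pi(q_2')=\ell_1$, and if it is constant it contains all three lines---either way a contradiction. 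The paper does this work in contrapositive form: assuming every plane meets the lines in collinear points, it first shows $\ell_2$ and $\ell_3$ must lie in a common plane $H_1$, and then builds a witness plane from a point of $\ell_1$ outside $H_1$. You should either adopt that argument or supply a perturbation argument such as the one above; as written, your proposal stops precisely at the point where non-coplanarity has to be used.
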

		\begin{proof}
			Let $\LL$ be the union of $d = 3$ non-coplanar lines in $\P^3$ (if $d > 3$, then there are three non-coplanar lines in the union $\LL$, so we choose to focus on those three lines).
			Note that the condition that the intersection points are collinear is a closed condition (i.e., the set of planes $ax+by+cz+dw=0$ meeting the lines in collinear points yields a closed subset of points $(a,b,c,d) \in (\P^3)^*$), so it is enough to show that these are not all the planes; that is, if there exists a plane which meets $\LL$ in non-collinear points, then the set of all such planes must be a nonempty open subset of $(\P^3)^*$.

			Assume not; that is, assume every plane meets $\LL$ in collinear points.
			Let $p_1 \in \ell_1$ such that $p_1 \not\in \ell_2 \cup \ell_3$ and $p_2 \in \ell_2$ such that $p_2 \notin \ell_1 \cup \ell_3$.
			Given points $q,r\in \P^3$, let $L(q,r)$ denote the line through $q$ and $r$.
			Then we must have $L(p_1,p_2) \cap \ell_3 \not=\emptyset$ (otherwise any plane containing $L(p_1,p_2)$ would meet $\ell_1\cup \ell_2\cup\ell_3$ in non-collinear points).
			Next, pick $p_2' \in \ell_2$ such that $p_2'\not=p_2$ and $p_2'\notin \ell_1 \cup \ell_3$.
			Then $L(p_1,p_2')$ also meets $\ell_3$ (in such a way that $L(p_1,p_2)\cap \ell_3 \not= L(p_1,p_2') \cap \ell_3$), and thus the plane $H_1$ formed by $L(p_1,p_2)$ and $L(p_1,p_2')$ contains $\ell_2$ and $\ell_3$.
			
			If $\ell_1$ is contained in $H_1$, we have a contradiction, as the lines are assumed to be non-coplanar.
			Thus, assume $\ell_1$ is not contained in $H_1$; choose $q_1 \in \ell_1$ such that $q_1\notin H_1$ and let $q_1'$ denote the unique point of intersection between $H_1$ and $\ell_1$.
			Furthermore, let $q_2 \in \ell_2$ such that $q_2\notin \ell_1 \cup \ell_2$, and $q_3 \in \ell_3$ such that $q_3 \notin \ell_1 \cup \ell_2 \cup L(q_1',q_2)$.
			Clearly, $q_1,q_2,q_3$ are coplanar, but they are not collinear, as $q_1 \notin H_1$ and $L(q_2,q_3) \subseteq H_1$.
			Thus, the plane formed by $q_1, q_2$, and $q_3$ does not meet the lines $\ell_1,\ell_2,\ell_3$ in collinear points.
		\end{proof}

		Another way to say this is:

		\begin{cor}\label{CorCoplanarLines}
			If $d \geqslant 3$ lines in $\P^3$ intersect a general hyperplane $H$ in collinear points, then the lines are coplanar.
		\end{cor}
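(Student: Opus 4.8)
The plan is to recognize that this corollary is nothing more than the contrapositive of Proposition \ref{PropGenHyperplaneNon}, so that no new geometric input is needed. First I would pin down the logical content of both statements. Proposition \ref{PropGenHyperplaneNon} says that whenever the $d \geq 3$ lines are non-coplanar, a general hyperplane meets them in $d$ non-collinear points; unwinding the meaning of ``general,'' this asserts that the locus of hyperplanes $H \in (\P^3)^*$ meeting the lines in non-collinear points is a nonempty, hence dense, open subset of $(\P^3)^*$. The hypothesis of the corollary, by contrast, is that for a general $H$ the $d$ intersection points are collinear, i.e.\ that collinearity holds on a dense open subset of $(\P^3)^*$.

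The key step is then to extract a contradiction from assuming the lines are not coplanar. In that case Proposition \ref{PropGenHyperplaneNon} produces a dense open set $U \subseteq (\P^3)^*$ of hyperplanes meeting the lines in non-collinear points, while the corollary's hypothesis supplies a dense open set $U'$ on which the points are collinear. Since $(\P^3)^*$ is irreducible, any two nonempty open subsets meet, so $U \cap U' \neq \emptyset$; a hyperplane in this intersection would meet the lines in points that are simultaneously collinear and non-collinear, which is absurd. Therefore the lines must be coplanar, which is exactly the assertion of the corollary.

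Because the argument is pure contraposition, I do not expect any genuine obstacle. The only point deserving care is the interpretation of the word ``general'' in the two statements, together with the observation that irreducibility of $(\P^3)^*$ forces the two genericity conditions to be incompatible. Indeed, one could compress the entire proof to a single line: \emph{this is the contrapositive of Proposition \ref{PropGenHyperplaneNon}.}
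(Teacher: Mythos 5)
Your proposal is correct and matches the paper exactly: the paper introduces this corollary with ``Another way to say this is:'' and gives no separate proof, treating it precisely as the contrapositive of Proposition \ref{PropGenHyperplaneNon}. Your careful unwinding of ``general'' via two dense open subsets of the irreducible space $(\P^3)^*$ is a correct and slightly more explicit rendering of the same one-line argument.
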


		We set the following notation.

		\begin{notation}
			Let $H_1,H_2,\ldots,H_d\subset \P^3$ be hyperplanes, no three of which contain any line.
			Set $\ell_{ij} = H_i\cap H_j$ for all $i < j$, and put $\LL = \bigcup\limits_{1\leq i < j \leq d} \ell_{ij}$.
		\end{notation}

		We now come to the main result of this work, which describes an extension of Bocci and Chiantini's $t=1$ result for points in $\P^2$.
		In $\P^2$ every codimension 2 subscheme is ACM, as all finite sets of points in any $\P^N$ are ACM.
		In higher dimensions, not every codimension 2 subscheme in $\P^N$ is ACM (e.g., three skew lines in $\P^3$).
		However, the natural generalization of the Bocci-Chiantini result seems to be for ACM codimension 2 subschemes (but see Question \ref{QuestionExist1notACM}).


		\begin{thm}\label{ThmACMLines}
			Let $\LL$ be a union of lines $\ell_1,\ell_2,\ldots,\ell_s$.
			\begin{enumerate}[(a)]
				\item If $\LL$ is ACM of type $(d-1,d)$ for some $d>1$, then $\LL$ is either a pseudostar or coplanar.
				\item If $\LL$ is either a pseudostar or coplanar, then $\LL$ has type $(d-1,d)$ for some $d>1$.
			\end{enumerate}

		\end{thm}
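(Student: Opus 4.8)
The plan is to prove (b) directly from the results already in hand and to reduce (a) to the Bocci--Chiantini classification (Theorem~\ref{TheoremBC}) via a general hyperplane section. For (b), if $\LL$ is a pseudostar formed by $d$ planes then Corollary~\ref{CorPseudoStarEqualAlpha} gives $\alpha(\LL)=d-1$ and $\alpha(2\LL)=d$, i.e.\ type $(d-1,d)$; if $\LL$ is coplanar, its ideal is the complete intersection $(L,F)$ with $L$ the linear form of the common plane and $F$ a lift of the product of the in-plane linear forms, so $I(\LL)^{(2)}=I(\LL)^2=(L^2,LF,F^2)$ has least degree $2$, giving $\alpha(\LL)=1$ and $\alpha(2\LL)=2$, which is type $(d-1,d)$ with $d=2$.

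For (a), let $H$ be a general hyperplane and set $Z=\LL\cap H$, a reduced set of $s$ points in $H\cong\P^2$ (general $H$ meets each line in one point and separates the finitely many pairwise intersections). Since $\LL$ is ACM of dimension $1$, Corollary~\ref{CorHypSecsEqualAlphas} gives $\alpha(Z)=\alpha(\LL)=d-1$. For the fattening, a form of degree $\alpha(2\LL)$ vanishing doubly on $\LL$ restricts on general $H$ to a nonzero form vanishing doubly on $Z$, so $\alpha(2Z)\le\alpha(2\LL)=d$; combined with $\alpha(2Z)\ge\alpha(Z)+1=d$ (the $t\ge1$ argument applied to $Z\subseteq\P^2$), we get $\alpha(2Z)=d$. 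Thus $Z$ has type $(d-1,d)$, and Theorem~\ref{TheoremBC} forces $Z$ to be either (i) a collinear set with $\alpha(Z)=1$ (so $d=2$) or (ii) a star configuration of $\binom{d}{2}$ points with $\alpha(Z)=d-1$.

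In case (i) the $s$ lines of $\LL$ meet $H$ in collinear points; for $s\ge3$ Corollary~\ref{CorCoplanarLines} makes them coplanar, $s=1$ is trivial, and $s=2$ forces the two lines to meet (two skew lines fail the ACM Hilbert-function relation of Proposition~\ref{PropHypSecsEqualAlphas}), so $\LL$ is coplanar. In case (ii) with $d\ge4$, the star configuration $Z=\LL\cap H_0$ for a suitably general $H_0$ has defining lines $M_1,\dots,M_d$ recovered intrinsically as the lines meeting $Z$ in $d-1\ge3$ points. I would group the lines of $\LL$ by which $M_i$ their trace lies on, obtaining $d$ groups of $d-1$ lines. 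Choosing $H_0$ outside the finitely many bad loci supplied by Proposition~\ref{PropGenHyperplaneNon} (one for each $(d-1)$-element subset of the lines of $\LL$), the collinearity of each group's traces forces that group to be coplanar, spanning a plane $P_i$. The planes $P_1,\dots,P_d$ are distinct, each line of $\LL$ lies in exactly the two planes indexed by the two star lines through its trace and hence equals $P_i\cap P_j$, and the resulting bijection between the $\binom{d}{2}$ lines and the pairs $(i,j)$ exhibits $\LL=\bigcup_{i<j}P_i\cap P_j$ with no three planes sharing a line: a pseudostar.

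The delicate remaining case is $d=3$, where each star line carries only two traces and Corollary~\ref{CorCoplanarLines} is unavailable. Now $Z$ is three non-collinear points, so the three lines are not coplanar; I would finish by a direct Hilbert-function argument, since Proposition~\ref{PropHypSecsEqualAlphas} forces $H(S/I(\LL),t)=3t+1$, whereas three pairwise skew lines give $3t+3$ and a ``two meeting, one skew'' configuration gives $3t+2$. Thus ACM-ness selects the pairwise-intersecting non-coplanar lines, which are necessarily concurrent and form the projective cone over $Z$, a pseudostar. I expect the main obstacle to be exactly this lifting from the hyperplane section---verifying that each group of $d-1$ lines is coplanar and that the reconstructed planes meet in precisely the lines of $\LL$, together with the separate treatment of the degenerate $d=3$ configuration; the earlier reductions are routine consequences of Corollaries~\ref{CorHypSecsEqualAlphas} and~\ref{CorCoplanarLines}, Proposition~\ref{PropGenHyperplaneNon}, and Theorem~\ref{TheoremBC}.
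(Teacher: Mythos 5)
Your overall strategy coincides with the paper's: use the ACM hypothesis and Proposition \ref{PropHypSecsEqualAlphas} / Corollary \ref{CorHypSecsEqualAlphas} to transfer the type $(d-1,d)$ to the general hyperplane section, invoke Theorem \ref{TheoremBC} to conclude the section is collinear or a star, and then lift back using Proposition \ref{PropGenHyperplaneNon} and Corollary \ref{CorCoplanarLines}. Part (b), the collinear case, and the star case with $d\ge 4$ are sound and essentially reproduce the paper's argument (the paper disposes of $s\le 3$ ad hoc before the main reduction, but your handling of $s=2$ via the failure of the Hilbert-function relation for two skew lines is a fine substitute).

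The gap is in your $d=3$ case. Your enumeration of non-coplanar configurations of three lines is incomplete: besides three pairwise skew lines (Hilbert polynomial $3t+3$), ``two meeting, one disjoint from both'' ($3t+2$), and three concurrent lines ($3t+1$), there is the ``chain'' in which $\ell_2$ and $\ell_3$ each meet $\ell_1$ but not each other. The chain is connected of arithmetic genus $0$, so it also has Hilbert polynomial $3t+1$; worse for your argument, it is genuinely ACM: taking $I(\ell_1)=(x,z)$, $I(\ell_2)=(y,z)$, $I(\ell_3)=(x,w)$ one gets $I(\LL)=(xz,zw,xy)$, the ideal of $2\times 2$ minors of a $3\times 2$ matrix of linear forms, hence Cohen--Macaulay by Hilbert--Burch, and its first-difference Hilbert function $1,3,3,\dots$ agrees with that of three non-collinear points. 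So the assertion that ``ACM-ness selects the pairwise-intersecting non-coplanar lines'' is false. What actually excludes the chain is the type hypothesis, not the Hilbert function: any cubic double along the coplanar pair $\ell_1,\ell_2$ must contain the plane they span, and a short computation then shows no cubic is double along all three lines, so the chain has type $(2,4)$ rather than $(2,3)$. This is exactly how the paper deals with it, by treating $s=3$ separately and computing the type of each configuration directly; your $d=3$ argument needs this extra use of $\alpha(2\LL)=3$ to close.
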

		

		\begin{proof}[Proof of Theorem \ref{ThmACMLines}]
			We begin by proving (a), and first treat the cases in which $1\leq s \leq 3$ in an ad hoc fashion.
			
			 Indeed, if $s=1$, we have a single line, which is coplanar, so (a) holds.
			
			For $s=2$, either the lines meet, in which case they are coplanar, or the lines are skew.
			If the lines $\ell_1,\ell_2$ are skew, then, without loss of generality, we may take $I(\ell_1) = (x,y)$ and $I(\ell_2) = (z,w)$, so $I(\LL) = (x,y)\cap(z,w)$, $\alpha(\LL) = 2$, and $\alpha(2\LL) = \alpha((x,y)^2\cap(z,w)^2) = 4$ so $\LL$ has type $(d-2,d)$.
			In either case, if $s=2$, (a) holds.
			
			If $s=3$, we have three possible configurations.
			If the lines meet in a single point, they are either coplanar or a pseudostar.
			If the lines do not meet in a single point but intersect pairwise, they are coplanar.
			\begin{figure}\label{Fig2SkewLines}
				\centering
					\begin{tikzpicture}[scale=5/6]
						\filldraw [black] (-1/2,0) circle (2pt) node[below] {}
						(3/2,0) circle (2pt) node[below] {};
						\draw (-3/2,0) -- (5/2,0) node {\,\,\,\,\qquad $\ell_1$};
						\draw (-1/2,-1) -- (-1/2,3) node {\!\!\!\!\!\!\! $\ell_2$}; 
						\draw (3/2,-1) -- (3/2,3) node {\,\,\,\,\,\,\,$\ell_3$}; 
					\end{tikzpicture}
				\caption{Three lines $\ell_1,\ell_2,\ell_3$ in $\P^3$ such that $\ell_2\cap\ell_3 = \emptyset$.}
			\end{figure}
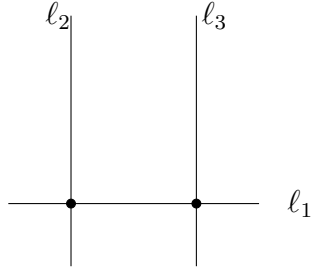
			The last case involves lines $\ell_1,\ell_2,\ell_3$ such that $\ell_2$ and $\ell_3$ do not meet, but $\ell_2\cap \ell_1 \not=\emptyset$ and $\ell_3\cap\ell_1 \not= \emptyset$, as in Figure \ref{Fig2SkewLines}.
			In this case, we can, after an appropriate change of coordinates, assume $I(\ell_1) = (x,z)$, $I(\ell_2) = (y,z)$, and $I(\ell_3) = (x,w)$.
			One can easily verify that $\LL = \ell_1\cup \ell_2\cup \ell_3$ is of type $(2,4)$.
			Thus, (a) is satisfied for $1\leq s \leq 3$. To finish the proof we consider the case that $s \geq 4$. 

			Suppose $\LL$ has type $(d-1,d)$ for some $d\geq 2$, and let $H$ denote a general hyperplane.
			As $\LL$ is ACM, we can apply Proposition \ref{PropHypSecsEqualAlphas} to $\LL$ to see that $\alpha(\LL) = \alpha(\LL\cap H) = d-1$, and since $d = \alpha(2\LL) \geq \alpha(2(\LL\cap H)) > \alpha(\LL\cap H) = d-1$ (see \cite{BocciChiantini}), the general hyperplane sections $\LL\cap H$ must have type $(d-1,d)$ in $H\isomorphic \P^2$.
			By \cite{BocciChiantini}, this means that the general hyperplane sections $\LL\cap H$ of $\LL$ are either a set of collinear points or a star of points in $\P^2$.

			If $\LL\cap H$ is a set of collinear points, we must have that $\LL$ is a set of coplanar lines (see Proposition \ref{PropGenHyperplaneNon}).
			Otherwise, by Proposition \ref{PropGenHyperplaneNon} (since $s\geq 4$ and thus $s-1 \geq 3$) we have $d$ (non-disjoint) collections of $d-1$ collinear points (in fact, we have ${d\choose 2}$ points total, since $\LL\cap H$ is a star in $H\isomorphic \P^2$).
			Each of the ${d\choose 2}$ points is the hyperplane section of exactly one of the $\ell_{ij}$'s, so we must have $s ={d\choose 2}$ lines $\ell_{ij}$, with $d$ (non-disjoint) collections of $d-1\geq 3$ coplanar lines.
			Moreover, since we have $d$ hyperplanes meeting in ${d\choose 2}$ lines, it must be that no three hyperplanes meet in a line, or else we would have strictly fewer than ${d\choose 2}$ lines, and thus strictly fewer than ${d\choose 2}$ hyperplane sections.
			Thus, $\LL$ forms a pseudostar.

			We now turn to (b). 
			Note that if $\LL$ is coplanar, $\LL$ clearly has type $(1,2)$.
			If $\LL$ is a pseudostar, then by Corollary \ref{CorPseudoStarEqualAlpha} $\LL$ has type $(d-1,d)$ for some $d> 1$.
			
		\end{proof}

	\section{Future Work}
		It seems as that pairing this approach with an inductive argument may generalize Theorem \ref{ThmACMLines} to ACM codimension 2 subschemes of $\P^N$, $N > 3$, but this has not yet been explored.
		
		There are several other avenues for future work.

		We made heavy use of the assumption that the lines in question in $\P^3$ are arithmetically Cohen-Macaulay (ACM).
		A natural question, then, is:
%
		%
		%
		\begin{question}\label{QuestionExist1notACM}
			Does there exist a configuration of lines of type $(d-1,d)$ which is not ACM?
		\end{question}
		
		If the answer to Question \ref{QuestionExist1notACM} is no, then the ACM hypothesis in Theorem \ref{ThmACMLines} is unnecessary. 
		
		Another interesting question (suggested by Juan Migliore) is:
		
		\begin{question}
			Which reduced (possibly irreducible) curves in $\P^3$ have type $(d-1,d)$ for some $d>1$?
		\end{question}

%
		As every finite set of points in $\P^N$ (for any $N\geq 1$) is ACM, another natural question to ask is:

		\begin{question}
			Which configurations of points in $\P^3$ have type $(d-1,d)$?
		\end{question}

		In \cite{BocciChiantini}, the authors also classify configurations of points in $\P^2$ which have type $(d-2,d)$.
		Thus, we ask:

		\begin{question}
			Which arrangements of lines in $\P^3$ have type $(d-2,d)$?
			Which arrangements of points in $\P^3$ have type $(d-2,d)$?
		\end{question}

\bibliography{thesis}{}
\bibliographystyle{plain}

\end{document}